\documentclass[11pt]{article}

\usepackage[a4paper,margin=2.6cm]{geometry}
\usepackage{amsmath,amssymb,amsthm,mathtools}
\usepackage{bm}
\usepackage{enumitem}
\usepackage{hyperref}
\hypersetup{colorlinks=true,linkcolor=blue,citecolor=blue,urlcolor=blue}
\newtheorem{definition}{Definition}[section]
\newtheorem{theorem}{Theorem}[section]
\newtheorem{lemma}[theorem]{Lemma}

\newtheorem{claim}{Claim}
\newtheorem{case}{Case}

\newtheorem{Problem}[theorem]{Problem}
\newcommand{\ol}[1]{\overline{#1}}
\newcommand{\vv}{\vee}

\newcommand{\E}{\mathrm E}

\title{Spectral Extremal Graphs without a $K_k$-Factor}
\author{{Cunxiang Duan$^{a,b}$, Tingting Han$^{c}$, Lin-Peng Zhang$^{d,\dagger}$ }\\
\\{\small $^{a}$School of Mathematics, East China University of Science and Technology, Shanghai 200237, P.R. China}\\
{\small $^{b}$School of Mathematics, Taiyuan University of Technology, Taiyuan, Shanxi 030024, P.R. China}\\
{\small $^{c}$School of Mathematical Sciences, Shanxi Normal University, Taiyuan, Shanxi 030031, P.R. China}\\
{\small $^{d}$School of Mathematics, Shandong University, Jinan, Shandong 250100, P.R. China}}
\date{}

\begin{document}
\maketitle
\footnotetext[1]{*C.X. Duan is supported by the National Natural Science Foundation of China (No.~1230145), the Natural Science Foundation of Shanxi Province (No.~202303021212025), and the Shanxi Scholarship Council of China (No.~2025-080). T. Han is supported by the National Natural Science Foundation of China (No.~12601682), and the Shanxi Scholarship Council of China (No.~2026-137). L.-P. Zhang is supported by the China Postdoctoral Science Foundation (No. 2025M783120), and Shandong Postdoctoral Science Foundation (No. SDZZ-ZR-202501340).}
\footnotetext[2]{$\dagger$Corresponding author; Email addresses: cxduanmath@163.com (C. Duan), hantingting@sxnu.edu.cn (T. Han),  lpzhangmath@163.com (L. Zhang).}
\begin{abstract}
Let $k\ge 3$ and let $n=km$. A $K_k$-factor in an $n$-vertex graph is a collection of
$m$ vertex-disjoint copies of $K_k$ that covers the entire vertex set. We determine the
maximum adjacency spectral radius of an $n$-vertex graph containing no $K_k$-factor
when $m\ge 2k-1$. More precisely, we prove that every such graph $G$ satisfies
\[
  \rho(G)\le \rho(H_{n,k}),
  \qquad
  H_{n,k}=K_{k-2}\vee\bigl(K_{n-k+1}\cup K_1\bigr),
\]
with equality if and only if $G\cong H_{n,k}$. Equivalently, the unique extremal graph
is obtained from $K_{n-1}$ by adding one vertex adjacent to exactly $k-2$ vertices of
the clique. Our proof combines a decomposition lemma for sparse complements, derived
from the Hajnal--Szemer\'edi theorem, with the Motzkin--Straus inequality and spectral
estimates based on quotient matrices and the Rayleigh quotient.
\end{abstract}

\noindent\textbf{Keywords:} spectral extremal problem; spectral radius; clique factor

\section{Introduction}
Throughout this paper, all graphs are finite, simple, and undirected. For standard
terminology and notation in spectral graph theory, we refer to
\cite{cve1980}.

Graph factors are among the central spanning structures in extremal graph theory. Given
graphs $G$ and $H$, an $H$-tiling in $G$ is a collection of pairwise vertex-disjoint
copies of $H$. An $H$-factor is an $H$-tiling that covers every vertex of $G$. Thus,
when $H=K_k$ and $n=km$, a $K_k$-factor consists of $m$ vertex-disjoint copies of
$K_k$ whose union is $V(G)$. A fundamental theorem of Hajnal and Szemer\'edi
\cite{HS1970} states that every $n$-vertex graph $G$, with $k\mid n$,
contains a $K_k$-factor whenever
\[
  \delta(G)\ge \left(1-\frac1k\right)n.
\]
This degree threshold is best possible. The theorem is a cornerstone of graph-factor
theory: it shows that sufficiently strong and uniformly distributed local density forces
a perfect packing of cliques.

A natural spectral analogue is to replace the minimum-degree condition by a condition on
the adjacency spectral radius. The spectral radius $\rho(G)$ captures global edge density
and, more importantly, how strongly that density is concentrated. This makes the spectral
problem substantially different from its minimum-degree counterpart. A graph can have a
very large spectral radius while still containing a small set of vertices that creates a local
obstruction to every spanning clique packing. Consequently, the relevant extremal question
is not merely how dense a $K_k$-factor-free graph can be, but how efficiently the obstruction
to a $K_k$-factor can be localized while preserving a large spectral radius.

More generally, spectral Tur\'an-type problems ask for the largest spectral radius of a graph
that avoids a prescribed subgraph or spanning structure. For spanning forbidden graphs,
the problem is particularly delicate because the obstruction may involve only a small part
of the vertex set, whereas the spectral radius reflects the structure of the whole graph, we refer to \cite{FanLin2024,FanLinLu2023,FanLinZhu2026}.

The Hamilton cycle is perhaps the most prominent example of a spanning structure studied
from this perspective. Fiedler and Nikiforov \cite{FiedlerNikiforov2010} proved sharp
spectral conditions for the existence of Hamilton paths and Hamilton cycles. In extremal
language, their Hamilton-cycle result implies that, for $n\ge 5$, the unique graph of maximum
spectral radius among all $n$-vertex non-Hamiltonian graphs is
\[
  K_1\vee\bigl(K_1\cup K_{n-2}\bigr).
\]
This result was followed by several refinements in which additional structural parameters
were taken into account. In particular, Li and Ning \cite{LiNing2016} determined, for
sufficiently large order, the maximum spectral radius of non-Hamiltonian graphs with a
prescribed lower bound on the minimum degree, thereby obtaining spectral analogues of
classical theorems of Erd\H{o}s and Moon-Moser. Ge and Ning
\cite{GeNing2020} treated the almost-spanning cycle $C_{n-1}$, while Li and Ning
\cite{LiNing2023} subsequently developed stability results and spectral conditions for
cycles whose lengths are close to $n$. These results show that, even for long cycles, the
spectral extremal graph often coincides with the corresponding edge-extremal graph.

Spectral methods have also been applied to degree-constrained spanning subgraphs. A perfect
matching is a $K_2$-factor, and O \cite{O2021} obtained a sharp adjacency-spectral-radius
condition guaranteeing its existence. More generally, an $[a,b]$-factor is a spanning
subgraph $F$ satisfying $a\le d_F(v)\le b$ for every vertex $v$. Fan, Lin, and Lu
\cite{FanLinLu2022} established spectral conditions for odd $[1,b]$-factors and for
$[a,b]$-factors, confirming a conjecture of Cho, Hyun, O, and Park \cite{Chop2021} in a broad range of
the parameters. Wei and Zhang \cite{WeiZhang2023} later resolved that conjecture in full.
Together with the Hamiltonicity results above, these theorems reveal a recurring extremal
pattern: a graph with very large spectral radius can fail to contain the desired spanning
structure only because of a highly localized deficiency, often concentrated at one vertex.

Moving from individual spanning structures to a general framework, Liu and Ning
\cite{Liu2026} recently established an asymptotic spectral Tur\'an-type theorem for
sparse spanning graphs. They proved that if $F$ is an $n$-vertex graph with no isolated
vertices and maximum degree at most $\sqrt{n}/40$, then, for sufficiently large $n$, the
unique $F$-free graph of maximum adjacency spectral radius is obtained from $K_{n-1}$ by
adding one vertex adjacent to exactly $\delta(F)-1$ vertices of the clique. Applying their
theorem to a $K_k$-factor gives the correct extremal construction when $n$ is sufficiently
large in terms of $k$, but it does not provide the explicit range considered here. The aim of
the present paper is therefore to prove an exact result, including uniqueness, for every
$n=km$ with $m\ge 2k-1$.

To describe the candidate extremal graph, let
\[
  H_{n,k}=K_{k-2}\vee\bigl(K_{n-k+1}\cup K_1\bigr),
\]
where $\vee$ denotes the join of two vertex-disjoint graphs. Equivalently, start with a
clique $K_{n-1}$, add one new vertex, and join it to exactly $k-2$ vertices of the clique.
The new vertex lies in no copy of $K_k$, so $H_{n,k}$ cannot contain a $K_k$-factor.
On the other hand, the remaining $n-1$ vertices induce a complete graph. Thus the
obstruction is confined to one vertex and causes the smallest possible loss of adjacency
around that vertex, making $H_{n,k}$ the natural spectral extremal candidate.

Our main result confirms this intuition.

\begin{theorem}\label{thm:main}
Let $k\ge 3$ and $m\ge 2k-1$ be integers, and set $n=km$. If $G$ is an
$n$-vertex graph containing no $K_k$-factor, then
\[
  \rho(G)\le \rho(H_{n,k}).
\]
Moreover, equality holds if and only if
\[
  G\cong H_{n,k}=K_{k-2}\vee\bigl(K_{n-k+1}\cup K_1\bigr).
\]
\end{theorem}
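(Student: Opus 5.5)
We may assume $G$ is edge-maximal without a $K_k$-factor, since adding edges strictly increases $\rho$ for connected graphs; in particular $\rho(G)\ge\rho(H_{n,k})>n-2$. Write $\ol G$ for the complement and $e(\ol G)$ for its number of edges. The plan has three steps: first show that $\ol G$ is sparse, then use a Hajnal--Szemer\'edi decomposition to locate a vertex of small degree, and finally compare spectral radii.

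\textbf{Step 1: the complement is sparse.} From $\rho(H_{n,k})>n-2$ and $\rho(G)\le\sqrt{2e(G)(1-1/\omega)}$ (Motzkin--Straus/Wilf), or more simply $\rho^2\le 2e(G)-(n-1)\delta$-type bounds, we deduce that $e(\ol G)$ is at most linear in $n$. A cleaner route is $\rho(G)\le \tfrac{\delta-1}{2}+\sqrt{2e(G)-n\delta+\tfrac{(\delta+1)^2}{4}}$. Combining these, $e(\ol G)\le cn$ for a modest constant, and the vertices of large $\ol G$-degree are few.

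\textbf{Step 2: decomposition lemma.} We show that if $\ol G$ has every vertex of degree at most $n-k+1-\ldots$, more precisely if $\delta(G)\ge k-1$ and $\ol G$ is sparse, then $G$ has a $K_k$-factor. The argument removes the few high-$\ol G$-degree vertices one at a time. Each such vertex $v$ has at least $k-1$ neighbours in $G$, and among them we find a $(k-1)$-clique lying inside the low-degree part; this is possible because the deficiency is small. We then delete that $K_k$. On the remaining $k(m-t)$ vertices, the minimum degree in $G$ is at least $(1-1/k)$ of the order, so Hajnal--Szemer\'edi gives a $K_k$-factor of the rest. The condition $m\ge 2k-1$ is what is needed here: the vertex count must be large enough that the deleted cliques do not destroy the Hajnal--Szemer\'edi degree condition. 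Consequently, a $K_k$-factor-free $G$ has a vertex $v$ with $d_G(v)\le k-2$, or else its complement is too dense and Step 1 already gives a contradiction.

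\textbf{Step 3: spectral comparison.} With $d_G(v)\le k-2$, $G$ is a spanning subgraph of $K_{k-2}\vee(K_{n-k+1}\cup K_1)$, after filling in all remaining edges on $V\setminus\{v\}$ and making $N(v)$ of size exactly $k-2$. By Perron--Frobenius monotonicity, $\rho(G)\le\rho(H_{n,k})$, with equality only if $G\cong H_{n,k}$. The value $\rho(H_{n,k})$ is bounded via its $3\times3$ quotient matrix. The Rayleigh quotient is used to exclude the other branch of Step 2: a graph whose complement has a large but non-star-like structure has $\rho<\rho(H_{n,k})$. This is shown by testing the Perron vector against the bound $\rho\le n-1-2e(\ol G)/n+$ error, compared with $\rho(H_{n,k})\approx n-2+\tfrac{k-2}{n}$-type estimates.

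\textbf{Main obstacle.} The hardest step is Step 2 in the intermediate regime, where several vertices have moderate $\ol G$-degree: not small enough to satisfy Hajnal--Szemer\'edi directly, yet $\rho$ is still close to $n-2$. I would first try a greedy absorption. For each bad vertex, pick $k-1$ good $G$-neighbours forming a clique; this is possible because good vertices miss at most about $n/k$ others. The counting that guarantees these cliques are disjoint across all bad vertices is where $m\ge 2k-1$ is used, and it needs a careful explicit inequality, possibly sharpened with Motzkin--Straus to control how many bad vertices can exist.
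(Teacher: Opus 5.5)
\textbf{Gap in Step 2.} Your endgame is sound. If $\delta(G)\le k-2$, then $G$ is a spanning subgraph of a copy of the connected graph $H_{n,k}$, so $\rho(G)\le\rho(H_{n,k})$, with equality only for $G\cong H_{n,k}$. Combined with $\rho(H_{n,k})>n-2$, everything reduces to one claim: if $\delta(G)\ge k-1$ and $G$ has no $K_k$-factor, then $\rho(G)\le n-2$. This is where Step 2 fails. Its conclusion, that such a $G$ must have a dense complement, is false.

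Take $K_{n-1}$ on $V\setminus\{v\}$, delete one edge $xy$, and join $v$ to $x$, $y$ and $k-3$ further vertices. Then:
\begin{enumerate}
\item $\delta(G)=k-1$;
\item $v$ lies in no $K_k$, because its $k-1$ neighbours do not form a clique, so $G$ has no $K_k$-factor;
\item $e(\overline G)=n-k+1\le n-2$, which is exactly $e(\overline{H_{n,k}})$.
\end{enumerate}
The step ``among them we find a $(k-1)$-clique \ldots\ because the deficiency is small'' is what breaks. The global deficiency is small, but a single missing edge inside $N(v)$ destroys every $(k-1)$-clique there.

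Because this graph has exactly as many complement edges as $H_{n,k}$, no argument using only $\delta(G)$ and $e(\overline G)$ can exclude it. In particular, your comparison $\rho\le n-1-2e(\overline G)/n+\text{error}$ cannot help. Note also that $n-1-2e(\overline G)/n=2e(G)/n$ is a lower bound for $\rho$, not an upper one.

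\textbf{The missing idea: a local spectral estimate.} The dichotomy the paper actually proves, from $e(\overline G)\le km-2$, is: either $\alpha(G)\ge m+1$, or some vertex $v$ lies in no copy of $K_k$. The budget $km-2$ comes sharply from Hong's inequality $\rho^2\le 2e(G)-n+1$. The bookkeeping needs essentially this budget, so the unspecified $e(\overline G)\le cn$ in your Step 1 is not enough.

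The paper proves the dichotomy by working in $\overline G$ rather than by greedy absorption:
\begin{enumerate}
\item At stage $q$, delete an independent $k$-set of $\overline G$ through a maximum-degree vertex; this removes at least $q$ edges.
\item The complementary Tur\'an bound shows every remaining vertex still lies in an independent $k$-set. This is where $m\ge 2k-1$ enters.
\item Once $\Delta\le q-1$, finish with the equitable-colouring form of Hajnal--Szemer\'edi.
\end{enumerate}
The branch $\alpha(G)\ge m+1$ is excluded by comparison with $K_{(k-1)m-1}\vee\overline{K_{m+1}}$.

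The second branch is exactly your counterexample, and it needs a separate spectral lemma. Here $G[N(v)]$ is $K_{k-1}$-free. By Motzkin--Straus, the edges inside $D=N(v)$ contribute at most $(1-\frac1{k-2})\bigl(\sum_{u\in D}x_u\bigr)^2\le(1-\frac1{k-2})|D|\sum_{u\in D}x_u^2$ to the Rayleigh quotient. Bounding all other edges trivially reduces $\rho(G)$ to the largest eigenvalue of a $3\times 3$ matrix. Its variables are $x_v$, $\bigl(\sum_{D}x_u^2\bigr)^{1/2}$ and $\bigl(\sum_{S}x_u^2\bigr)^{1/2}$, where $S$ is the set of non-neighbours of $v$. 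An explicit eigenvalue estimate (a Collatz--Wielandt test vector) shows this eigenvalue is $<n-2$ when $|D|\ge k-1$.

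Without this step, or some substitute that detects where the missing edges sit relative to $v$, your argument cannot separate the example above from $H_{n,k}$.
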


We briefly outline the proof. A direct calculation using an equitable partition shows that
\[
  \rho(H_{n,k})>n-2.
\]
The main step is to prove that every $K_k$-factor-free graph $G$ with
$\delta(G)\ge k-1$ satisfies $\rho(G)\le n-2$. If $\rho(G)>n-2$, then a standard
spectral edge bound forces the complement $\overline{G}$ to have at most $n-2$ edges.
We then apply a decomposition lemma for such sparse complements, obtained using the
Hajnal--Szemer\'edi theorem. This produces a structural dichotomy: either $G$ contains
an independent set of size at least $m+1$, or some vertex of $G$ belongs to no copy of
$K_k$. The first possibility is ruled out by comparison with an appropriate complete split
graph. For the second, the neighborhood of the exceptional vertex is $K_{k-1}$-free; the
Motzkin--Straus inequality, together with a three-part decomposition of the Perron vector,
then yields the strict estimate $\rho(G)<n-2$.

It follows that a spectral-extremal graph must contain a vertex of degree at most $k-2$.
Since such a vertex cannot lie in a copy of $K_k$, adding any missing edge not incident
with it preserves the absence of a $K_k$-factor. Spectral maximality therefore forces all
other vertices to form a clique. The same edge-addition argument forces the exceptional
vertex to have degree exactly $k-2$, which identifies $H_{n,k}$ and proves the uniqueness
of the equality case. This argument illustrates a broader phenomenon in spectral factor
problems: an extremal obstruction to a spanning structure may be concentrated at one
exceptional vertex, while spectral maximality forces the rest of the graph to be complete.

The assumption $m\ge 2k-1$ enters through the decomposition of the sparse complement.
It is natural to ask whether the same extremal graph remains optimal, and uniquely so,
outside this range. In particular, the following problem remains open.

\begin{Problem}
For integers $k,m$ with $k\geq 3$ and $k\leq m\leq 2k-2$, is $H_{n,k}$ the unique extremal graph for the spectral radius among all $n$-vertex graphs with $n=km$ and no $K_k$-factor?
\end{Problem}

The remainder of the paper is organized as follows. In Section~\ref{Sec:2}, we establish the
decomposition lemma for sparse complements and the local spectral estimate used in the
proof. In Section~\ref{Sec:3}, we analyze the candidate graph $H_{n,k}$ and combine these ingredients
to prove Theorem~\ref{thm:main}.

\section{Preliminaries}\label{Sec:2}

For a graph $G$, let $V(G)$ and $E(G)$ denote its vertex and edge sets, respectively, and write $e(G)=|E(G)|$. The adjacency matrix of $G$ is denoted by $A(G)$,  and its spectral radius by $\rho(G)$. The minimum degree, maximum degree, independence number, and clique number of $G$ are denoted by $\delta(G)$, $\Delta(G)$, $\alpha(G)$, and $\omega(G)$, respectively. The complement of $G$ is denoted by $\overline{G}$.

\subsection{Structure Properties}

We first establish a decomposition lemma for sparse complements.

\begin{lemma}\label{cor:structural-dichotomy}
For integers $k\ge 3$ and $m\ge 2k-1$, let $G$ be a graph of order $km$ containing no $K_k$-factor. If
\[
e(\ol G)\le km-2,
\]
then at least one of the following two conclusions holds:
\begin{enumerate}[label=\textup{(\roman*)}]
\item $\alpha(G)\ge m+1$;
\item there exists a vertex $v\in V(G)$ contained in no copy of $K_k$.
\end{enumerate}
\end{lemma}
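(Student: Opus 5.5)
We argue by contraposition: we assume that $\alpha(G)\le m$, that every vertex of $G$ lies in a copy of $K_k$, and that $e(\ol G)\le km-2$, and we build a $K_k$-factor. The plan has two stages. First, we greedily remove a few vertex-disjoint copies of $K_k$ that absorb all vertices of large complement degree. Second, we apply the Hajnal--Szemer\'edi theorem to what remains. Suppose we remove $s$ cliques, leaving $G'$ on $n'=k(m-s)$ vertices. The theorem applies to $G'$ as soon as $\delta(G')\ge (1-1/k)n'$, that is, as soon as every vertex satisfies $d_{\ol{G'}}(v)\le m-s-1$. So we define the \emph{bad} vertices to be those with $d_{\ol G}(v)\ge m-s_0$, for a small threshold parameter $s_0$ of order $k$ that we fix later. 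Since $\sum_v d_{\ol G}(v)\le 2(km-2)$, there are fewer than $2km/(m-s_0)$ bad vertices. Using $m\ge 2k-1$, this is $O(k)$, and it is at most about $2k+1$ when $s_0$ is chosen suitably.

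\textbf{Stage 1: covering the bad vertices.} We process the bad vertices one at a time, always taking the vertex of currently largest complement degree among the remaining ones. For a bad vertex $v$ we need a copy of $K_k$ through $v$ that avoids the vertices already used, and whose other $k-1$ vertices are all good. We split into two cases.
\begin{itemize}
\item If $d_G(v)$ is large, we look inside $N_G(v)$ minus the used and bad vertices. This set has size at least $d_G(v)-O(k^2)$. The complement restricted to it has at most $km-2$ edges, so Tur\'an's theorem applied to $\ol G$ there yields an independent set of size $k-1$ in $\ol G$, which is a $K_{k-1}$ in $G$. Together with $v$ it gives the required clique.
\item If $d_G(v)$ is small, $v$ is nearly isolated in $G$. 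Then $v$ is nearly universal in $\ol G$ and absorbs about $n$ complement edges, so only about $km-n$ complement edges remain for the rest of the graph. Hence $G-v$ is almost complete, and the copy of $K_k$ through $v$ guaranteed by hypothesis can be rerouted: its other vertices may be replaced by fresh good vertices adjacent to $v$.
\end{itemize}
The hypothesis $\alpha(G)\le m$ is used exactly to rule out a large clique in $\ol G$ among bad vertices. Such a clique would force many bad vertices to be pairwise nonadjacent in $G$, so they could not share cliques. It also guarantees that the bad vertices can be assigned to distinct cliques.

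\textbf{Stage 2: applying Hajnal--Szemer\'edi.} After $s$ removals, where $s$ is at most the number of bad vertices, every surviving vertex was good, so its complement degree is at most $m-s_0-1$. This holds even counting edges to removed vertices. We therefore need $s\le s_0$, so that $m-s_0-1\le m-s-1$ and the Hajnal--Szemer\'edi threshold is met. This forces a consistency condition between $s_0$ and the bound on the number of bad vertices, roughly $2km/(m-s_0)\le s_0$. That is where $m\ge 2k-1$ should enter, for example via the choice $s_0=2k$ or similar. If the count is too tight, we refine it with a second pass: vertices of moderate complement degree lose complement degree as neighbours are removed, so the removed cliques can be chosen to use vertices adjacent in $\ol G$ to the currently worst remaining vertices.

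\textbf{Main obstacle.} We expect the difficulty to lie in balancing the bound on the number of bad vertices against the Hajnal--Szemer\'edi slack $m-s-1$ in the borderline range $m\approx 2k-1$. In particular, extremal configurations such as a complement containing $K_{m+1}$, which needs about $m^2/2$ edges, sit exactly at the boundary of $e(\ol G)\le km-2$. There the edge count alone does not suffice, and the condition $\alpha(G)\le m$ must be invoked carefully. Our first attempt will be to handle a single vertex of complement degree at least $m$ separately, since it is the only way a vertex can violate the threshold by a lot. After that we would run the counting argument with threshold $m-1$.
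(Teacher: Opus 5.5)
There is a genuine gap, at exactly the point you flag, and it cannot be fixed by tuning $s_0$. With a fixed threshold you only know $b\le 2(km-2)/(m-s_0)$. Your Stage~1 gives each bad vertex its own clique, so for this bound to force $b\le s_0$ you need roughly $s_0(m-s_0)\ge 2km$. But $s_0(m-s_0)\le m^2/4<2km$ whenever $m<8k$, and your suggested $s_0=2k$ exceeds $m$ when $m=2k-1$.

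This is not just slack in the estimate. Take $m=2k-1$, $1\le s_0<k$, and let $\overline G$ be $K_{m-s_0+1}$ plus isolated vertices. It has at most $(k-1)(2k-1)\le km-2$ edges, $\alpha(G)\le m$, and every vertex lies in a $K_k$. It has $m-s_0+1>s_0$ bad vertices, pairwise non-adjacent in $G$, so they need distinct cliques and $s>s_0$. Your Stage~2 certificate ``survivors have complement degree $\le m-s_0-1\le m-s-1$'' then breaks down. What it ignores is that each removal also destroys complement edges at the survivors. The ``second pass'' you mention is where the actual proof would have to happen, and it is not supplied.

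Stage~1 is also unproved. The large/small split on $d_G(v)$ has no threshold, and a bound of the form $d_G(v)-O(k^2)$ is vacuous when $n=km$ is itself of order $k^2$. Finally, $\alpha(G)\le m$ cannot supply the missing leverage. Since $\binom{m+1}{2}\ge km>km-2$ for $m\ge 2k-1$, alternative (i) is already excluded by the edge bound. So $K_{m+1}\subseteq\overline G$ is not a boundary configuration, and the paper's proof never uses $\alpha(G)\le m$.

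The missing idea is to let the threshold move with the process, as the paper does. Work in $F=\overline G$ and remove one independent $k$-set at a time. With $kq$ vertices left, if $\Delta(F_q)\le q-1$, the Hajnal--Szemer\'edi theorem in its equitable-colouring form splits $F_q$ into $q$ independent $k$-sets and you are done. Otherwise, delete an independent $k$-set through a vertex of maximum degree (it exists by the invariant below). This destroys at least $q$ edges, so the edge budget falls to $E_{q-1}=km-2-\sum_{i=q}^{m}i$.

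In place of your Stage~1, maintain the invariant that every remaining vertex lies in an independent $k$-set of the remaining complement. A violating vertex $u$ has non-neighbourhood $A$ with $\alpha(F_{q-1}[A])\le k-2$. Complementary Tur\'an then gives $e(F_{q-1})\ge k(q-1)-1-|A|+\binom{|A|}{2}-t_{k-2}(|A|)\ge kq-2k+1$. The inequality $kq-2k+1>E_{q-1}$ for all $2\le q\le m$ is precisely where $m\ge 2k-1$ enters, so the invariant survives each deletion.
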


\begin{proof}
Suppose, to the contrary, that $\alpha(G)\le m$, and every vertex of $G$ is contained in a copy of $K_k$. It follows that every vertex of $\ol G$ is contained in an independent $k$-set.

\begin{claim}\label{Claim1} 
If $e(G)\le km-2$ and every vertex of $G$ is contained in an independent $k$-set, then $V(G)$ can be partitioned into $m$ independent $k$-sets.
\end{claim}

\begin{proof}

We construct the desired partition by repeatedly deleting independent
$k$-sets. For $1\le q\le m$, let $G_q$ denote the current induced
subgraph, so that
\[
|V(G_q)|=kq.
\]
We maintain the following two invariants:\\[2mm]
(a) $e(G_q)\le E_q:=km-2-\sum\limits_{i=q+1}^m i$;\\[1mm]
(b) every vertex of $G_q$ is contained in an independent $k$-set of $G_q$.\\[2mm]
Both invariants hold initially for $G_m=G$.

Suppose that $q\ge 2$ and $\Delta(G_q)\le q-1$.  The Hajnal--Szemer\'edi theorem \cite{KiKo2008} asserts that, for every positive integer $r$, every graph with maximum degree at most $r$ has an equitable coloring with $r+1$ colors, where a coloring is called equitable if the sizes of any two color classes differ by at most one. Applying this theorem to $G_q$ with $r=q-1$, we conclude that $G_q$ admits an equitable $q$-coloring. Since $|V(G_q)|=kq$, every color class must have size exactly $k$. Hence $G_q$ can be partitioned into $q$ independent $k$-sets. Together with the independent $k$-sets deleted earlier, this gives the desired partition of $V(G)$.

Suppose that $q\ge 2$ and $\Delta(G_q)\ge q$. Choose a maximum-degree vertex $v\in V(G_q)$. By the invariant, $v$ is contained in some independent $k$-set $T$. Put
\[
G_{q-1}=G_q-T.
\]
Since $d_{G_q}(v)\ge q$, deleting $T$ removes at least $q$ edges. Hence
\[
e(G_{q-1})\le e(G_q)-q\le E_q-q=E_{q-1},
\]
so the edge invariant is preserved. It remains to show that every vertex of $G_{q-1}$ is still contained in an independent $k$-set of $G_{q-1}$.

Suppose that there is a vertex $u\in V(G_{q-1})$ not contained in any independent $k$-set of $G_{q-1}$. Let
\[
A=V(G_{q-1})\setminus\bigl(\{u\}\cup N_{G_{q-1}}(u)\bigr),
\qquad s=|A|.
\]
If $G_{q-1}[A]$ contained an independent $(k-1)$-set, then together with $u$ it would form an independent $k$-set containing $u$, a contradiction. Thus
\[
\alpha(G_{q-1}[A])\le k-2.
\]
Set $r=k-2$, and let $t_r(s)=e(T_{s,r})$ be the number of edges in the complete $r$-partite Tur\'an graph on $s$ vertices. By the complementary form of Tur\'an's theorem,
\[
e(G_{q-1}[A])\ge \binom{s}{2}-t_r(s).
\]
Moreover,
\[
d_{G_{q-1}}(u)=k(q-1)-1-s.
\]
Therefore
\begin{align*}
e(G_{q-1})
&\ge d_{G_{q-1}}(u)+e(G_{q-1}[A])\\
&\ge k(q-1)-1-s+\binom{s}{2}-t_r(s).
\end{align*}
Define
\[
\phi_r(s)=\binom{s}{2}-t_r(s).
\]
Then $\phi_r(s)$ is the minimum number of edges in an $s$-vertex graph with independence number at most $r$. When $s$ is increased by one,
\[
\phi_r(s+1)-\phi_r(s)=\left\lfloor\frac{s}{r}\right\rfloor.
\]
It follows that $\phi_r(s)-s$ is decreasing for $s<r$, constant for $r\le s<2r$, and increasing for $s\ge 2r$. Hence
\[
\phi_r(s)-s\ge -r=-(k-2).
\]
Consequently,
\begin{equation}\label{eq:lower-bound-Fqminus1}
e(G_{q-1})\ge k(q-1)-1-(k-2)=kq-2k+1.
\end{equation}
On the other hand, by (a),
\[
e(G_{q-1})\le E_{q-1}=km-2-\sum_{i=q}^m i.
\]
We claim that
\begin{equation}\label{eq:key-gap}
E_{q-1}<kq-2k+1
\end{equation}
for all $2\le q\le m$. Let $t=m-q$. Then
\begin{align*}
kq-2k+1-E_{q-1}
&=kq-2k+1-km+2+\sum_{i=q}^m i\\
&=m-2k+3+t(m-k)-\frac{t(t+1)}2.
\end{align*}
This is a concave quadratic polynomial in $t$, so its minimum on the interval $0\le t\le m-2$ is attained at an endpoint. At the endpoints we have
\[
t=0:\qquad m-2k+3> 0,
\]
and
\[
t=m-2:\qquad m-2k+3+\frac{(m-2)(m-2k+1)}2> 0,
\]
where we use $m\ge 2k-1$. Thus \eqref{eq:key-gap} holds.

Equations \eqref{eq:lower-bound-Fqminus1} and \eqref{eq:key-gap} give a contradiction. Therefore every vertex of $G_{q-1}$ is contained in an independent $k$-set of $G_{q-1}$, and the invariants are preserved.

Each deletion step decreases $q$ by one, so the process must terminate.  Hence the process eventually reaches a stage at
which the Hajnal–Szemerédi theorem applies, or else reaches $q=1$. In the latter case, $G_1$ has $k$ vertices, and each of its vertices is contained in an independent $k$-set. Hence $V(G_1)$ itself is independent, and the process terminates. Combining the remaining partition with all deleted independent $k$-sets gives a partition of $V(G)$ into $m$ independent $k$-sets.
\end{proof}

Combining $e(\ol G)\le km-2$ and Claim \ref{Claim1}, the vertex set $V(\ol G)$ can be partitioned into $m$ independent $k$-sets. Therefore, $G$ contains a $K_k$-factor, a contradiction.
\end{proof}

\subsection{Spectral Properties}

We first recall the notion of an equitable partition and a standard spectral property of its quotient matrix. We then establish the key spectral estimate needed later.

\begin{definition}[\cite{cve1980}] \label{def:cve}
Given a graph $G$, the vertex partition $\Pi: V(G) = V_1 \cup V_2 \cup \cdots \cup V_k$ is called equitable if, for each $u \in V_i$, $|V_j \cap N_G(u)| = b_{ij}$ is a constant depending only on $i,j$ ($1 \leq i,j \leq k$). The matrix $B_\Pi = (b_{ij})$ is called the quotient matrix of $G$ with respect to $\Pi$.
\end{definition}

\begin{lemma}[\cite{cve1980}]\label{lem:cve} 
Let $\Pi: V(G) = V_1 \cup \cdots \cup V_k$ be an equitable partition of $G$ with quotient matrix $B_\Pi$. Then $\det(xI - B_\Pi) \mid \det(xI - A(G))$. Furthermore, if $G$ is connected, then the spectral radius of $B_\Pi$ equals the spectral radius of $G$.
\end{lemma}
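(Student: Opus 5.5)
The statement is the classical equitable-partition (quotient matrix) lemma, and I would prove it by linear algebra on the characteristic matrix of the partition. Let $n=|V(G)|$, $A=A(G)$, $n_i=|V_i|$. Let $P$ be the $n\times k$ matrix with $P_{u i}=1$ if $u\in V_i$ and $0$ otherwise. The defining property of an equitable partition says that for $u\in V_i$,
\[
(AP)_{uj}=|N_G(u)\cap V_j|=b_{ij}=(PB_\Pi)_{uj}.
\]
Hence $AP=PB_\Pi$, and this identity is the only input needed.

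\textbf{Divisibility.} Since the columns of $P$ have disjoint nonempty supports, they are linearly independent, so $W=\operatorname{col}(P)$ is a $k$-dimensional subspace of $\mathbb{R}^n$. The identity $AP=PB_\Pi$ shows that $W$ is $A$-invariant, and that in the basis given by the columns of $P$ the restriction $A|_W$ is represented by $B_\Pi$. Extend this basis to a basis of $\mathbb{R}^n$. In the new basis $A$ becomes block upper triangular,
\[
\begin{pmatrix} B_\Pi & * \\ 0 & C\end{pmatrix},
\]
so $\det(xI-A)=\det(xI-B_\Pi)\det(xI-C)$, which gives the divisibility.

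\textbf{Spectral radius.} Assume $G$ is connected, so $A$ is irreducible and nonnegative. By Perron--Frobenius, $\rho(G)$ is a simple eigenvalue with a positive eigenvector, and every eigenvalue has modulus at most $\rho(G)$. From divisibility, every eigenvalue of $B_\Pi$ is an eigenvalue of $A$, so $\rho(B_\Pi)\le\rho(G)$. For the reverse inequality, $B_\Pi$ is a nonnegative matrix, so by Perron--Frobenius for nonnegative matrices it has a nonnegative eigenvector $y\neq 0$ with $B_\Pi y=\rho(B_\Pi)y$. Then
\[
A(Py)=PB_\Pi y=\rho(B_\Pi)\,Py,
\]
and $Py\ne0$, $Py\ge0$. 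Thus $Py$ is a nonnegative eigenvector of the irreducible matrix $A$. The only eigenvalue of an irreducible nonnegative matrix that has a nonnegative eigenvector is its spectral radius, so $\rho(B_\Pi)=\rho(G)$.

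An alternative route to the reverse inequality, avoiding the nonnegative-eigenvector fact, is to use the Perron vector $x>0$ of $A$. Form $z=P^{T}x>0$, and note that $z^{T}B_\Pi=x^{T}PB_\Pi=x^{T}AP=\rho(G)z^{T}$. So $\rho(G)$ is an eigenvalue of $B_\Pi$ (via the left eigenvector $z$), giving $\rho(B_\Pi)\ge\rho(G)$ directly.

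The only mildly delicate step is the spectral-radius equality, where connectivity, i.e.\ irreducibility, is genuinely needed. I would use the left-eigenvector argument, since it is the cleanest.
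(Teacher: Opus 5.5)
Your proposal is correct. The paper gives no proof of this lemma; it only cites Cvetković--Doob--Sachs. Your argument is the standard textbook proof behind that citation:
\begin{itemize}
\item the identity $AP=PB_\Pi$;
\item the $A$-invariant column space of $P$, giving a block upper-triangular form and hence the divisibility;
\item Perron--Frobenius for the spectral-radius equality.
\end{itemize}

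One correction to your closing remark. Connectivity is not actually needed for your left-eigenvector route. For any graph, the Perron vector satisfies $x\ge 0$ and $x\ne 0$. Then $z=P^{T}x$ satisfies $z\ge 0$, and $z\ne 0$, because a part containing a vertex with $x_u>0$ has positive coordinate sum. The relation $z^{T}B_\Pi=\rho(G)z^{T}$ then still shows that $\rho(G)$ is an eigenvalue of $B_\Pi$. Together with $\rho(B_\Pi)\le\rho(G)$ from divisibility, this gives $\rho(B_\Pi)=\rho(G)$ for every equitable partition of every graph. Irreducibility is used only in your first route, which lifts a nonnegative eigenvector of $B_\Pi$ to $A$. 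This does not affect the paper, since it applies the lemma only to the connected graphs $H_{n,k}$ and $J_{m,k}$.
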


\begin{lemma}\label{lem:local-spectral-exclusion}
For integers $k\ge 3$ and $m\ge 2k-1$, let $G$ be a graph of order $n=km$ satisfying $\delta(G)\ge k-1$. If there exists a vertex $v$ contained in no copy of $K_k$, then
\[
\rho(G)<n-2.
\]
\end{lemma}

\begin{proof}
Let
\[
D=N_G(v),\qquad
S=V(G)\setminus\bigl(D\cup\{v\}\bigr),
\]
and put
\[
d=|D|,\qquad
s=|S|=n-d-1,\qquad
r=k-2.
\]
Since $\delta(G)\ge k-1$, we have $d\ge k-1$. Since $v$ is contained in no copy of $K_k$, the graph $G[D]$ contains no $K_{k-1}$. Hence 
\begin{equation}
\omega(G[D])\le k-2=r.
\label{eq:clique-bound}
\end{equation}

Let $\bm x=(x_u)_{u\in V(G)}$ be a nonnegative unit eigenvector of $G$ corresponding to $\rho(G)$. Define
\[
a=x_v,
\qquad
b=\left(\sum_{u\in D}x_u^2\right)^{1/2},
\qquad
c=\left(\sum_{u\in S}x_u^2\right)^{1/2}.
\]
Then $a^2+b^2+c^2=1$. We estimate the Rayleigh quotient by splitting the edges according to the
partition
\[
V(G)=\{v\}\cup D\cup S.
\]
First consider the edges inside $D$. Let
\[
W=\sum_{u\in D}x_u.
\]
If $W=0$, then their contribution is zero. Suppose that $W>0$, and set
\[
y_u=\frac{x_u}{W},\qquad u\in D.
\]
Then $y_u\ge0$ and
\[
\sum_{u\in D}y_u=1.
\]
By \eqref{eq:clique-bound} and the Motzkin--Straus theorem \cite{MS1965},
\[
\sum_{uw\in E(G[D])}y_uy_w
\le
\frac12\left(1-\frac1r\right).
\]
Multiplying by $2W^2$ and then applying the Cauchy--Schwarz inequality, we
obtain
\begin{align}
2\sum_{uw\in E(G[D])}x_ux_w
&\le
\left(1-\frac1r\right)
\left(\sum_{u\in D}x_u\right)^2 \notag\\
&\le
\left(1-\frac1r\right)db^2.
\label{4}
\end{align}
Since every vertex of $D$ is adjacent to $v$, we have
\begin{align}
2\sum_{u\in D}x_vx_u\le 2a\sqrt d\,b.
\end{align}
Moreover,
\begin{align}
2\sum_{uw\in \E(D,S)}x_ux_w
\le 2\left(\sum_{u\in D}x_u\right)\left(\sum_{w\in S}x_w\right)
\le 2\sqrt{ds}\,bc,
\end{align}
and
\begin{align}
2\sum_{uw\in \E(G[S])}x_ux_w
\le 2\sum_{\{u,w\}\subseteq S}x_ux_w
\le (s-1)c^2.
\label{7}
\end{align}
Combining Inequalities \eqref{4}--\eqref{7}, we obtain
\[
\rho(G)\le \left(1-\frac1r\right)db^2+2a\sqrt d\,b+2\sqrt{ds}\,bc+(s-1)c^2.
\]
Equivalently,
\[
\rho(G)\le
\begin{pmatrix}a&b&c\end{pmatrix}
B
\begin{pmatrix}a\\ b\\ c\end{pmatrix},
\]
where
\[
B=\begin{pmatrix}
0&\sqrt d&0\\
\sqrt d&\left(1-\frac1r\right)d&\sqrt{ds}\\
0&\sqrt{ds}&s-1
\end{pmatrix}.
\]
Since $a^2+b^2+c^2=1$, it follows that $\rho(G)\le \rho(B)$. 

We dintinguish two cases.

\begin{case}
$s=0$
\end{case}

In this case, we have $d=n-1$, and the matrix reduces to
\[
B=\begin{pmatrix}
0&\sqrt d\\
\sqrt d&\left(1-\frac1r\right)d
\end{pmatrix}.
\]
Its characteristic polynomial is
\[
\phi(x)=x^2-\left(1-\frac1r\right)dx-d.
\]
Substituting $x=d-1=n-2$ gives
\[
\phi(d-1)=\frac{d^2-(2r+1)d+r}{r}.
\]
Since $d=n-1\ge k(2k-1)-1$, in particular $d\ge 2r+1$, and hence
\[
d^2-(2r+1)d+r>0.
\]
Also $\phi(0)=-d<0$. Thus the positive root of $\phi$ is strictly smaller than $d-1=n-2$, and so 
\[
\rho(G)\le\rho(B)<n-2.
\]

\begin{case}
$s\ge1$
\end{case}

Set
\[
\theta=n-2=d+s-1,
\qquad
\mu=1-\frac1r.
\]
We construct a positive vector $z$ such that $Bz<\theta z$. First note that
\[
\Delta_0:=\frac dr-1-\frac d\theta>0.
\]
Indeed, this is equivalent to
\[
\theta>\frac{dr}{d-r}.
\]
Since $d\ge k-1=r+1$, the function $dr/(d-r)$ is decreasing in $d$, and hence
\[
\frac{dr}{d-r}\le r(r+1).
\]
On the other hand,
\[
\theta=n-2\ge k(2k-1)-2=(r+2)(2r+3)-2>r(r+1).
\]
Thus $\Delta_0>0$. Choose $\eta>0$ sufficiently small so that
\[
\eta\left(s+\frac d\theta\right)<\Delta_0.
\]
Set
\[
z=\begin{pmatrix}
(1+\eta)\dfrac{\sqrt d}{\theta}\\[0.6em]
1\\[0.4em]
(1+\eta)\sqrt{\dfrac{s}{d}}
\end{pmatrix}.
\]
Clearly $z>0$. We verify the three coordinates. For the first coordinate,
\[
(Bz)_0=\sqrt d<(1+\eta)\sqrt d=\theta z_0.
\]
For the second coordinate,
\[
(Bz)_1=\sqrt d\,z_0+\mu dz_1+\sqrt{ds}\,z_2
=\frac{d(1+\eta)}\theta+\mu d+s(1+\eta).
\]
Thus $(Bz)_1<\theta z_1=\theta$ is equivalent to
\[
\frac{d(1+\eta)}\theta+\eta s<\frac dr-1.
\]
The left-hand side equals
\[
\frac d\theta+\eta\left(\frac d\theta+s\right),
\]
which is strictly smaller than $d/r-1$ by the choice of $\eta$. 
For the third coordinate,
\begin{align*}
\theta z_2-(Bz)_2
&=(d+s-1)(1+\eta)\sqrt{\frac sd}-\sqrt{ds}-(s-1)(1+\eta)\sqrt{\frac sd}\\
&=d(1+\eta)\sqrt{\frac sd}-\sqrt{ds}
=\eta\sqrt{ds}>0.
\end{align*}
Therefore $Bz<\theta z$. 
Since $z>0$ and $Bz<\theta z$ coordinatewise, the Collatz--Wielandt inequality yields
\[
\rho(B)
\le
\max_i \frac{(Bz)_i}{z_i}
<
\theta.
\]
Therefore,
\[
\rho(G)\le \rho(B)<\theta=n-2.
\]
\end{proof}

\section{Proof of the main result}\label{Sec:3}

\begin{proof}[Proof of Theorem~\ref{thm:main}]
Let $G$ be a graph of maximum spectral radius among all graphs of order $n$ with no $K_k$-factor. 

\begin{claim}\label{lem:lower-spectral}
$\rho(G)>n-2$.
\end{claim}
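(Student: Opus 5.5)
Since $G$ maximizes the spectral radius among all $n$-vertex graphs with no $K_k$-factor, and $H_{n,k}$ is such a graph, we have $\rho(G)\ge \rho(H_{n,k})$. It therefore suffices to show $\rho(H_{n,k})>n-2$. We do this with the equitable partition of $H_{n,k}=K_{k-2}\vee(K_{n-k+1}\cup K_1)$ into three parts:
\[
V_1=V(K_{k-2}),\qquad V_2=V(K_{n-k+1}),\qquad V_3=V(K_1).
\]
Each vertex of $V_1$ has $k-3$ neighbours in $V_1$, $n-k+1$ in $V_2$, and $1$ in $V_3$. Each vertex of $V_2$ has $k-2$ neighbours in $V_1$, $n-k$ in $V_2$, and none in $V_3$. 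The vertex of $V_3$ has $k-2$ neighbours in $V_1$ and none elsewhere. Hence the quotient matrix is
\[
B_\Pi=\begin{pmatrix}
k-3 & n-k+1 & 1\\
k-2 & n-k & 0\\
k-2 & 0 & 0
\end{pmatrix}.
\]
Since $H_{n,k}$ is connected, Lemma~\ref{lem:cve} gives that $\rho(H_{n,k})$ equals the largest root of $f(x)=\det(xI-B_\Pi)$.

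Expanding the determinant along the last row (or directly), we obtain a monic cubic $f$. We then evaluate $f(n-2)$ and aim to show $f(n-2)<0$. Since $f(x)\to+\infty$ as $x\to\infty$, this produces a root of $f$ in $(n-2,\infty)$, so the largest root exceeds $n-2$.

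A quicker route to the sign avoids expanding $f$ fully. Rather than computing the determinant, one can note that $H_{n,k}$ contains $K_{n-1}$ as a proper subgraph, and it is connected. By strict monotonicity of the spectral radius under adding edges to a connected graph (Perron--Frobenius), $\rho(H_{n,k})>\rho(K_{n-1}\cup K_1)=n-2$. This is cleaner, and I would use it as the main argument, keeping the quotient-matrix computation only as a check. In detail, $K_{n-1}\cup K_1$ is a spanning subgraph of the connected graph $H_{n,k}$ with strictly fewer edges, since $k-2\ge1$. The Perron vector $x$ of $H_{n,k}$ is strictly positive, so
\[
\rho(H_{n,k})=x^{T}A(H_{n,k})x> x^{T}A(K_{n-1}\cup K_1)x.
\]
The right-hand side is not yet comparable to $n-2$, so we argue differently. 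Let $y$ be the unit Perron vector of $K_{n-1}\cup K_1$ supported on the clique, so that $y^T A(K_{n-1}\cup K_1)\, y=n-2$. Then
\[
\rho(H_{n,k})\ge y^TA(H_{n,k})y=n-2.
\]
Equality would force $y$ to be an eigenvector of $A(H_{n,k})$. But $y$ vanishes at the isolated vertex while $(A(H_{n,k})y)$ is positive there, because that vertex has $k-2\ge1$ neighbours in the clique. This contradiction gives strict inequality.

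The only step needing care is this last strictness argument, and it is routine. Hence $\rho(G)\ge\rho(H_{n,k})>n-2$.
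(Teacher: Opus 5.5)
Your argument is correct, but your actual proof takes a different route from the paper's. The paper works only with the quotient matrix of the equitable partition $V_0=\{v\}$, $V_1=N(v)$, $V_2$. It expands
\[
\det(xI-B)=x^3-(n-3)x^2-(n+k-4)x+(k-2)(n-k),
\]
evaluates this at $n-2$ to get $-(k-2)^2<0$, and concludes that $B$ has an eigenvalue beyond $n-2$.

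You set up the same quotient matrix, with the parts reordered, but never finish that computation. Your real proof is that $K_{n-1}\cup K_1$ is a proper spanning subgraph of the connected graph $H_{n,k}$, so strict monotonicity of $\rho$ gives $\rho(H_{n,k})>n-2$. This is the same Perron--Frobenius fact the paper itself invokes later, citing \cite{SD2015}, in Claim~\ref{connectness} and in the final edge-addition steps. Your self-contained Rayleigh-quotient check is also sound. Take $y$ equal to $1/\sqrt{n-1}$ on the clique and $0$ at the special vertex $v$; then $y^TA(H_{n,k})y=n-2$. Equality $\rho(H_{n,k})=n-2$ would force $y$ to be an eigenvector, which is impossible because $(A(H_{n,k})y)_v=(k-2)/\sqrt{n-1}>0$. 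The line $x^TA(H_{n,k})x>x^TA(K_{n-1}\cup K_1)x$ is a dead end, as you noticed, and can simply be deleted.

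Your route is computation-free and visibly robust: it gives $\rho>n-2$ for \emph{any} graph formed by $K_{n-1}$ plus one vertex of positive degree, using only $k-2\ge 1$. The paper's route yields the explicit cubic whose largest root is $\rho(H_{n,k})$, which is useful for finer quantitative comparisons, although this claim uses only the sign of $\phi_{n,k}(n-2)$. If you keep the quotient-matrix computation as a check, your ordering is a permutation of the paper's, so you get the same polynomial and $f(n-2)=-(k-2)^2$.
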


\begin{proof}
Partition $V(H_{n,k})$ into three parts
\[
V_0=\{v\},\qquad |V_1|=k-2,\qquad |V_2|=n-k+1,
\]
where $V_1=N(v)$ and $V_1\cup V_2$ induces $K_{n-1}$. This is an equitable partition, and its quotient matrix is
\[
B=\begin{pmatrix}
0&k-2&0\\
1&k-3&n-k+1\\
0&k-2&n-k
\end{pmatrix}.
\]
Therefore $\rho(H_{n,k})=\rho(B)$. A direct calculation gives
\[
\phi_{n,k}(x)=\det(xI-B)=x^3-(n-3)x^2-(n+k-4)x+(k-2)(n-k).
\]
Substituting $x=n-2$, we obtain
\[
\phi_{n,k}(n-2)=-(k-2)^2<0.
\]
Since $\phi_{n,k}(x)\to +\infty$ as $x\to +\infty$, the polynomial $\phi_{n,k}$ has a real root greater than $n-2$. This root is an eigenvalue of $B$, and hence it is at most the spectral radius of $B$. Consequently,
\[
\rho(H_{n,k})=\rho(B)>n-2.
\]
Since $H_{n,k}$ has no $K_k$-factor and satisfies
$\rho(G)\ge \rho(H_{n,k})$, we have \[\rho(G)>n-2.\]
\end{proof}

\begin{claim}\label{connectness}
$G$ is connected.
\end{claim}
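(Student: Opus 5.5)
We argue by contradiction and use the spectral maximality of $G$ together with Claim~\ref{lem:lower-spectral}. Suppose $G$ is disconnected. Then $\rho(G)=\rho(G[C])$ for some connected component $C$, and $|C|\le n-1$. Since every graph on $|C|$ vertices is a subgraph of $K_{|C|}$, we get
\[
\rho(G)=\rho(G[C])\le \rho(K_{|C|})=|C|-1.
\]
Combined with $\rho(G)>n-2$, this forces $|C|-1>n-2$, that is, $|C|\ge n$. This contradicts $|C|\le n-1$. Hence $G$ is connected.

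The only ingredients are the following.
\begin{itemize}
\item The spectral radius of a disjoint union is the maximum of the spectral radii of its components.
\item Spectral radius is monotone under taking subgraphs, so a graph on $t$ vertices has $\rho\le t-1$.
\end{itemize}
Both are standard Perron--Frobenius facts. There is no real obstacle: the strict inequality $\rho(G)>n-2$ from Claim~\ref{lem:lower-spectral} is exactly what is needed.

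Alternatively, one could argue by maximality directly. Adding an edge between two components could create a $K_k$-factor, so that route is less clean. The counting argument above avoids the issue entirely, and I would use it.
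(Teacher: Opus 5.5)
Your proof is correct, but it takes a different route from the paper. You use the quantitative bound $\rho(G)>n-2$ from Claim~\ref{lem:lower-spectral}: a component $C$ carrying $\rho(G)$ has at most $n-1$ vertices, so $\rho(G)\le |C|-1\le n-2$, a contradiction. There is no circularity, since that claim needs only the maximality of $G$ and the computation for $H_{n,k}$.

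The paper argues purely by maximality instead. It adds an edge $xy$ from the component $C$ with $\rho(C)=\rho(G)$ to another component and observes that $G+xy$ still has no $K_k$-factor. Strict Perron--Frobenius monotonicity on the connected component $C'\supsetneq C$ then gives $\rho(G+xy)>\rho(G)$, contradicting maximality.

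Your closing remark dismissing that route is mistaken. An edge joining two components is a bridge of $G+xy$, so it lies on no cycle. Hence it lies in no triangle, and in no copy of $K_k$ for $k\ge 3$. Consequently every $K_k$-factor of $G+xy$ would already be a $K_k$-factor of $G$, so the paper's route is perfectly clean.

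As for what each approach buys: yours needs only non-strict monotonicity and no reasoning about factors. It also proves the general fact that every $n$-vertex graph with $\rho>n-2$ is connected. However, it depends on the explicit spectral computation for $H_{n,k}$. The paper's argument is independent of Claim~\ref{lem:lower-spectral} and of the value of the extremum. It would therefore work for the $K_k$-factor problem ($k\ge 3$) even without a good lower bound on $\rho(G)$.
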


\begin{proof}
Suppose, to the contrary, that $G$ is disconnected. Let $C$ be a component of $G$ such that
$\rho(C)=\rho(G)$. We can obtain a graph $G+xy$ from $G$ by adding an edge $xy$, where $x\in V(C)$ and a vertex $y\in V(G-C)$. Since $k\ge 3$ and $xy$ is a bridge of the component of $G+xy$ containing $C$, the single added edge $xy$ is contained in no copy of $K_k$. Therefore $G+xy$ still has no $K_k$-factor.
Let $C'$ be the component of $G+xy$ containing $C$. Note that $C$ is a proper subgraph of the connected graph $C'$. By the strict monotonicity of the spectral radius \cite{SD2015},
\[
\rho(G+xy)\ge\rho(C')>\rho(C)=\rho(G),
\]
a contradiction. Therefore, $G$ is connected.
\end{proof}

\begin{claim}\label{lem:core-spectral}
If $\delta(G)\ge k-1$, then $\rho(G)\le n-2$.
\end{claim}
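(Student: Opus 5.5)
We argue by contradiction: suppose $\delta(G)\ge k-1$ and $\rho(G)>n-2$. The plan is to turn the spectral hypothesis into sparsity of $\ol G$, apply Lemma~\ref{cor:structural-dichotomy}, and then eliminate each of its two alternatives.

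\textbf{Step 1: sparsity of the complement.} We use the standard bound $\rho(G)\le\sqrt{2e(G)-n+1}$, which holds for connected $G$ (Hong's inequality); connectivity is available by Claim~\ref{connectness}. From $\rho(G)>n-2$ we get
\[
2e(G)>(n-2)^2+n-1=n^2-3n+3.
\]
Since $2e(G)=n(n-1)-2e(\ol G)$, this gives $2e(\ol G)<2n-3$, hence $e(\ol G)\le n-2=km-2$. As $G$ has no $K_k$-factor and $m\ge 2k-1$, Lemma~\ref{cor:structural-dichotomy} applies. Therefore either $\alpha(G)\ge m+1$, or some vertex of $G$ lies in no copy of $K_k$.

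\textbf{Step 2: the second alternative.} Suppose some vertex lies in no copy of $K_k$. Since $\delta(G)\ge k-1$, Lemma~\ref{lem:local-spectral-exclusion} applies directly and gives $\rho(G)<n-2$. This contradicts our assumption.

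\textbf{Step 3: the first alternative.} Suppose $\alpha(G)\ge m+1$, and let $I$ be an independent set of size $m+1$. Then $G$ is a spanning subgraph of the complete split graph $K_{n-m-1}\vee\ol{K_{m+1}}$. By monotonicity of the spectral radius, $\rho(G)$ is at most the largest root of the quotient matrix
\[
\begin{pmatrix} n-m-2 & m+1\\ n-m-1 & 0\end{pmatrix},
\]
namely
\[
\lambda=\frac{n-m-2+\sqrt{(n-m-2)^2+4(m+1)(n-m-1)}}{2}.
\]
The inequality $\lambda\le n-2$ is equivalent to
\[
(m+1)(n-m-1)\le (n-2)m.
\]
Expanding, this becomes $n-m-1\le m^2+2m-2m=m^2$, i.e.\ $n\le m^2+m+1$. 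With $n=km$ this holds because $m\ge 2k-1>k-1$, giving $km\le m^2<m^2+m+1$. Hence $\rho(G)\le\lambda<n-2$, again a contradiction.

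Alternatively, Step 3 can be bypassed. The complement of $K_{m+1}$ inside $\ol G$ already requires $\binom{m+1}{2}$ edges, and $\binom{m+1}{2}>km-2$ when $m\ge 2k-1$. This contradicts Step 1 directly and is the quickest route.

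\textbf{Main obstacle.} The delicate point is Step 1: we must justify Hong's bound, which needs connectivity (supplied by Claim~\ref{connectness}), and we must get the rounding exactly right so that the threshold $km-2$ of Lemma~\ref{cor:structural-dichotomy} is reached. The two eliminations are routine once the lemmas are in place.
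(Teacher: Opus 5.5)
Your argument is correct and follows the paper's proof. Hong's bound gives $e(\ol G)\le km-2$, Lemma~\ref{cor:structural-dichotomy} splits into two cases, Lemma~\ref{lem:local-spectral-exclusion} handles a vertex lying in no $K_k$, and the complete split graph $K_{n-m-1}\vee\ol{K_{m+1}}$ handles $\alpha(G)\ge m+1$. For Hong's bound, $\delta(G)\ge k-1\ge 2$ already rules out isolated vertices, which is how the paper justifies it, so you do not need Claim~\ref{connectness}.

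There is one arithmetic slip in Step 3. The inequality $(m+1)(n-m-1)\le m(n-2)$ reduces to $n\le m^2+1$, not $n\le m^2+m+1$; this is exactly the paper's $\phi_J(n-2)=m^2-km+1\ge 0$. Since $km\le m^2$, the inequality is still strict and your conclusion stands.

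Your counting shortcut is also valid, because
\[
\binom{m+1}{2}-(km-2)=\tfrac{m(m+1-2k)}{2}+2>0 \quad\text{for } m\ge 2k-1.
\]
It shows that alternative (i) of Lemma~\ref{cor:structural-dichotomy} can never occur under that lemma's own hypothesis $e(\ol G)\le km-2$. That is a genuine simplification: it makes the paper's split-graph spectral computation unnecessary.
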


\begin{proof}
Suppose, to the contrary, that $\rho(G)>n-2$. Since $\delta(G)\ge k-1\ge 2$, the graph $G$ has no isolated vertices. By  Hong-type spectral radius inequality \cite{Hong1988}, we have
\[
(n-2)^2<\rho(G)^2\le 2e(G)-n+1.
\]
Thus
\[
2e(G)>n^2-3n+3.
\]
Since $e(G)$ is an integer, we have
\[
e(G)\ge \frac{n^2-3n+4}{2}=\binom{n-1}{2}+1.
\]
Therefore
\[
e(\ol G)=\binom n2-e(G)\le \binom n2-\binom{n-1}{2}-1=n-2=km-2.
\]
By Lemma~\ref{cor:structural-dichotomy}, at least one of the following two cases occurs.

First, suppose that $\alpha(G)\ge m+1$. Choose an independent set  of size $m+1$ in $V(G)$. Then
\[
G\subseteq J_{m,k}=K_{(k-1)m-1}\vv \ol{K_{m+1}}.
\]
With respect to the partition into the complete part and the independent part, the quotient matrix of $J_{m,k}$ is
\[
B_J=
\begin{pmatrix}
(k-1)m-2&m+1\\
(k-1)m-1&0
\end{pmatrix}.
\]
Its characteristic polynomial is
\[
\phi_J(x)=x^2-\bigl((k-1)m-2\bigr)x-\bigl((k-1)m-1\bigr)(m+1).
\]
Substituting $x=n-2=km-2$, we obtain
\[
\phi_J(n-2)=m^2-km+1.
\]
Since $m\ge 2k-1\ge k$, we have $m^2-km+1\ge 1>0$. Since $\phi_J(0)<0$, the positive root of $\phi_J$ is strictly smaller than $n-2$. Hence 
\[
\rho(G)\le \rho(J_{m,k})<n-2,
\]
a contradiction.

Second, suppose that there exists a vertex $v$ contained in no copy of $K_k$. Then Lemma~\ref{lem:local-spectral-exclusion} gives
\[
\rho(G)<n-2,
\]
again a contradiction.

Therefore, $\rho(G)\le n-2$.
\end{proof}

By Claims~\ref{lem:lower-spectral} and \ref{lem:core-spectral}, we have $\delta(G)\le k-2$. Choose a vertex $v$ of minimum degree, i.e., $d_G(v)\le k-2$. So $v$ is contained in no copy of $K_k$.

We next prove that $G-v$ must be complete. Suppose that there exists an edge $xy\notin E(G-v)$. We can obtain the graph $G'$ by adding the edge $xy$ to $G$. Since the edge $xy$ is not incident with $v$, the degree of $v$ remains at most $k-2$, and hence $v$ is still contained in no copy of $K_k$. Therefore, $G'$ still has no $K_k$-factor. By Claim \ref{connectness} and the strict monotonicity of the spectral radius \cite{SD2015}, we have $\rho(G)<\rho(G'),$ a contradiction. Thus
\[
G-v\cong K_{n-1}.
\]

It remains to prove that $d_G(v)=k-2$. If $d_G(v)\le k-3$, then there exists a non-neighbor $u$ of $v$ in $G-v$. We can obtain the graph $G''$ by adding the edge $uv$. The degree of $v$ in $G''$ is still at most $k-2$, so $v$ is still contained in no copy of $K_k$. Thus $G''$ has no $K_k$-factor. By Claim \ref{connectness} and the strict monotonicity of the spectral radius \cite{SD2015}, we have $\rho(G)<\rho(G''),$ again a contradiction. Therefore
\[
d_G(v)=k-2.
\]
Consequently, $G$ consists of a copy of $K_{n-1}$ together with one special vertex adjacent to exactly $k-2$ vertices of the clique. Hence
\[
G\cong K_{k-2}\vv\bigl(K_{n-k+1}\cup K_1\bigr)=H_{n,k}.
\]
This proves the uniqueness of the extremal graph. In particular,
\[
\rho(G)\le \rho(H_{n,k}),
\]
and equality holds if and only if $G\cong H_{n,k}$.
\end{proof}

\end{document}